\documentclass[12pt]{article}
\usepackage[utf8]{inputenc}
\usepackage[english]{babel}
\usepackage{amsmath}
\usepackage{amsthm}
\usepackage{amsfonts}
\usepackage{amssymb}
\usepackage{mathrsfs}
\usepackage[margin=20mm]{geometry}
\usepackage{enumerate}
\usepackage{color}
\usepackage{hyperref}


\newcommand{\class}{\mathcal{C}}
\newcommand{\sphere}{\mathbb{S}}
\DeclareMathOperator{\vol}{vol}
\newcommand{\R}{\mathbb{R}}
\newcommand{\supp}{\mathop{\mathrm{supp}}}
%

\newtheorem{teo}{Theorem}[section]
\newtheorem{lem}[teo]{Lemma}
\newtheorem{rmk}[teo]{Remark}

\newtheorem{cor}[teo]{Corollary}
\newtheorem{prop}[teo]{Proposition}




\title{Some remarks on Petty projection of log-concave functions}
\date{\today}

\author{Leticia Alves da Silva\thanks{Partially supported by FAPERJ, project reference 236508 Brazil.}, 
Bernardo Gonz\'alez Merino%
\thanks{Partially supported by 
 MICINN project PGC2018-094215-B-I00 Spain.
},
Rafael Villa%
\footnotemark[2]
}

\AtEndDocument{\bigskip{\footnotesize%
  \textsc{IFMG Campus Bambu\'i, Instituto Federal de Minas Gerais, - Faz, Varginha -, Rodovia Bambu\'i/Medeiros - km 05. CP 05 - Bambu\'i - MG - CEP:38900-000, Brazil} \par
  \textit{E-mail address} L.~Alves da Silva: \texttt{leticia.alves@ifmg.edu.br} \par
  \addvspace{\medskipamount}
  \textsc{Universidad de Murcia, Facultad de Inform\'atica, Departamento de Ingenier\'ia y Tecnolog\'ia de Computadores, \'Area de Matem\'atica Aplicada, 30100-Murcia, Spain} \par
  \textit{E-mail address} B.~Gonz\'alez Merino: \texttt{bgmerino@um.es} \par
  \textsc{Universidad de Sevilla, Departamento de An\'alisis Matem\'atico, Apartado de Correos 1160, Sevilla, 41080, Spain} \par
  \textit{E-mail address} R.~Villa: \texttt{villa@us.es}
}}

\begin{document}

\maketitle
	


\abstract{In this note we study the Petty projection of a log-concave function, which has been recently introduced in \cite{fang2018lyz}. The aim
of this note is to report a mistake in Theorem 5.2 of \cite{fang2018lyz} and to give correct new inequalities involving this new notion.
}
	
\section{Introduction}


Let $K\subset\mathbb R^n$ be a \emph{convex body}, i.e., a convex and compact set with non-empty interior, whose
boundary is denoted by $\partial K$.
Moreover, let $\mathcal K^n$ be the set of all convex bodies in $\mathbb R^n$. For these and most of the forthcoming definitions and ideas on Convex Geometry, we recommend the books \cite{schneider2014convex} and \cite{artstein2015asymptotic}.

If the origin is an interior point of $K$, the polar body $K^\circ$ of $K$ is
$$
K^\circ=
\{x\in\R^n: \langle x,y\rangle\le1\text{ for any }y\in K\},
$$
which is also a convex body with the origin in its interior.

A convex body $K\in\mathcal K^n$ is uniquely defined by its \emph{support function}, defined by
\begin{equation}
\label{support}
h_K(x)=\sup\{\langle x,y\rangle:y\in K\}.
\end{equation}

For $C\subset\mathbb R^n$ of affine dimension $k\in\{1,\dots,n\}$, we denote by $\mathrm{vol}_k(C)$
its volume measured inside the affine hull of $C$, and moreover we also write $\mathrm{vol}(C)=\mathrm{vol}_n(C)$.

The mixed volume of two convex bodies $K$ ($n-1$ times) and $L$ can be defined by
$$
V_1(K,L)=
\frac1n
\lim_{\varepsilon\to0^+}\frac{\vol(K+\varepsilon L)-\vol(K)}{\varepsilon}.
$$

There is a unique finite measure $S(K,\cdot)$ on the unit euclidean sphere $\sphere^{n-1}$, called the \emph{surface area} of $K$,  so that
$$
V_1(K,L)=\frac1n\int_{\sphere^{n-1}}h_L(v)\,dS(K,v),
$$
(cf.~\cite{lutwak1993the}). When $K$ has a $\class^2$ boundary $\partial K$ with positive curvature, the density of $S(K,\cdot)$ with respect to the Lebesgue measure on $\sphere^{n-1}$ is the reciprocal of the Gauss curvature of $\partial K$.

 For any $x\in\R^n$ we let $x^\bot$ be
the $(n-1)$-subspace orthogonal to $x$, and let $P_{x^\bot}C$ be the orthogonal projection of $C$ onto $x^\bot$.
Then the \emph{projection body} $\Pi K$ of $K\in\mathcal K^n$ is the centrally symmetric convex body
given by its support function
$$
h_{\Pi K}(u)=\vol_{n-1}(P_{u^\bot}K),
$$
for every $u\in\sphere^{n-1}$. Using standard properties of the mixed volume $V_1(\cdot,\cdot)$ (cf.~\cite{schneider2014convex}) it is easy to see that
\begin{equation}\label{eq:supportProj}
h_{\Pi K}(u)=\frac12\int_{\mathbb S^{n-1}}|\langle u,v\rangle|dS(K,v).
\end{equation}
In fact,
$$
\int_{\mathbb S^{n-1}}|\langle u,v\rangle|dS(K,v)=
\int_{\sphere^{n-1}}h_{L_u}(v)\,dS(K,v),
$$
where $L_u=[-u,u]=\{tu:t\in[-1,1]\}$. Using Fubini's formula,
$$
{\vol(K+\varepsilon L_u)-\vol(K)}=2\varepsilon\int_{P_{u^\bot}K}dx'=2\varepsilon\vol_{n-1}({P_{u^\bot}K}).
$$
Then
$$
n V_{1}(K,L_u)
=\lim_{\varepsilon\to 0^+}\frac{\vol(K+\varepsilon L_u)-\vol(K)}{\varepsilon}=
2\vol_{n-1}({P_{u^\bot}K})
=2h_{\Pi K}(u).
$$

Finally, the \emph{polar projection body} $\Pi^\circ K$ is the polar body of $\Pi K$.



A function $f:\R^n\rightarrow[0,\infty)$ is \emph{log-concave} if $\log f$ is concave, i.e., if
\[
f((1-\lambda)x+\lambda y)\geq f(x)^{1-\lambda}f(y)^\lambda,
\]
for every $x,y\in\mathbb R^n$, $\lambda\in(0,1)$. Then $f=e^{-\varphi}$ for a convex function $\varphi:\R^n\rightarrow[-\infty,\infty)$.
Moreover, let $\mathcal F(\mathbb R^n)=\{f\text{ log-concave with }f\in L^1(\mathbb R^n)\}$.

Two typical embeddings of all convex bodies onto the set $\mathcal F(\mathbb R^n)$ are given by
the mappings that identify $K$ either with the \emph{characteristic function} $\chi_K(x)=e^{-I^\infty_K(x)}$
or the \emph{exponential gauge} $e^{-\Vert x\Vert_K}$ of $K$, where
\[
I^{\infty}_{K}(x) = \left\{
\begin{array}{lr}
0 & \text{if } \quad x \in K\\
\infty & \text{otherwise.}
\end{array} \right.
\quad\text{and}\quad
\Vert x\Vert_K=\inf\{t>0:x\in tK\}.
\]


Considering the definition of $h_K$ given by (\ref{support}), we can write
\begin{equation}\label{eq:supportfuction}
h_K(x)=\sup \{\langle x,y\rangle-I_K^\infty(y):y\in\R^n\}=(I_K^\infty)^*(x),
\end{equation}
where, for a convex function $\varphi$,
$$
\varphi^*(x)=\sup \{\langle x,y\rangle-\varphi(y):y\in\R^n\}
$$
is the so called \emph{Legendre transform} of $\varphi$ (cf.~\cite{klartag2005geometry}). As
$$
\chi_K=e^{-I_K^\infty}\text{ and }h_K=(I_K^\infty)^*,
$$
it is natural to define the \emph{support function} of a log-concave function $f=e^{-\varphi}$ as
$$
h_f=\varphi^*,
$$
(cf. \cite{rotem2012mean}).
Note  that $h_K^*=I^\infty_K$ for every $K\in\mathcal K^n$ (cf.~\cite{klartag2005geometry}).


In order to define the polar function of a function $f=e^{-\varphi}$ as a log-concave function, it is natural to search for a transformation $T$ between convex functions so that $f^\circ=e^{-T\varphi}$. Since $(K^\circ)^\circ=K$ and if $K_1\subset K_2$ then $K_1^\circ\supset K_2^\circ$, we have to ask for $T$ to verify $T^2$ to be the identity, and if $\varphi_1\le\varphi_2$, then $T\varphi_1\ge T\varphi_2$. From \cite{artstein2009The}, these properties characterize the Legendre transform, so $T\varphi=\varphi^*$.

As a consequence, for any log-concave $f:\R^n\rightarrow[0,+\infty)$ with $f=e^{-\varphi}$, its \emph{polar function} $f^\circ$  is defined by $f^\circ=e^{-\varphi^*}$ (cf.~\cite{artstein2009The}).
With this definition, if $f\in\mathcal (\mathbb R^n)$ with $0\in\mathrm{int}(\mathrm{supp}f)$, and $f^\circ(x_M)=\|f\|_\infty$ for some $x_M\in\R^n$, then $f^\circ\in\mathcal F(\mathbb R^n)$ too
(see Theorem \ref{thm:IntegrPolar} below).
Note that $f^\circ=e^{-h_f}$.

To define the analogue definition of $\Pi f$ for a log-concave $f$, firstly defined in \cite{fang2018lyz}, we take into account the equality for a convex body $K$
$$
\int_{\sphere^{n-1}}|\langle u,v\rangle|\,dS(K,v)
=
\int_{\R^n}|\langle\nabla\chi_K(x),u\rangle|\,dx,
$$
for $u\in\sphere^{n-1}$ (see \cite{zhang1999affine}, or Proposition \ref{prop:identities} \ref{prop:proj}) ). We may now generalize (\ref{eq:supportProj}) to define  the \emph{Petty projection function} $\Pi f$ of $f$ given its support function
\[
h_{\Pi f}(u)=\frac12\int_{\R^n}|\langle\nabla f(x),u\rangle|\,dx,
\]
(see  \cite{fang2018lyz}). Note that, by the chain rule, if $f=e^{-\varphi}$, then $\nabla f=-f\nabla\varphi$, and the previous definition admits the form
$$
h_{\Pi f}(u)=\frac12\int_{\supp f}|\langle\nabla \varphi(x),u\rangle|f(x)\,dx.
$$
In particular, for any $f\in\mathcal F(\mathbb R^n)$, the \emph{polar projection function}
is given by $\Pi^\circ f=(\Pi f)^\circ$.

\section{Properties and main result}

The main result here serves as a correction to \cite[Thm.~5.2]{fang2018lyz} and introduces a lower bound for the integral of $\Pi^\circ f$.
Let us denote by $B^n_2$ the \emph{n-dimensional Euclidean unit ball}, $\mathbb S^{n-1}$ its boundary, and let $\omega_n=\mathrm{vol}(B^n_2)$ be its volume. Moreover, let $|x|=\sqrt{x_1^2+\cdots+x_n^n}$ be the \emph{Euclidean norm} for every $x=(x_1,\dots,x_n)\in\mathbb R^n$.

\begin{teo}\label{thm:polar}
Let $f\in\mathcal F(\mathbb R^n)$. Then
\[
\left(\int_{\mathbb R^n}|\nabla f(x)|dx\right)^n\int_{\mathbb R^n}\Pi^\circ f(z)dz\geq \omega_n n!\left(\frac{n\omega_n}{\omega_{n-1}}\right)^n.
\]
Moreover, equality holds if there exists $g:[0,\infty)\rightarrow[0,\infty)$, $g\in\mathcal F(\mathbb R^1)$,
such that $f(x)=g(|x|)$ for every $x\in\mathbb R^n$.
\end{teo}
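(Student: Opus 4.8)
The plan is to rewrite \emph{both} factors on the left-hand side as integrals over $\mathbb S^{n-1}$ of (a power of) the single function $h_{\Pi f}$, and then to close the argument with a sharp H\"older inequality on the sphere.

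First, identify $\Pi^\circ f$ explicitly. By the integrability properties of functions in $\mathcal F(\mathbb R^n)$ recalled above, $u\mapsto h_{\Pi f}(u)=\tfrac12\int_{\R^n}|\langle\nabla f(x),u\rangle|\,dx$ is finite and strictly positive on $\mathbb S^{n-1}$; being moreover positively $1$-homogeneous and sublinear, it is the support function of a convex body $K:=\Pi f$ with $0\in\mathrm{int}(K)$, so that $\Pi f=e^{-h_{\Pi f}^{*}}=e^{-I^{\infty}_{K}}=\chi_{K}$ and hence $\Pi^\circ f=(\Pi f)^\circ=\chi_{K}^\circ=e^{-h_K}=e^{-h_{\Pi f}}$. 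Writing $h_K=\Vert\cdot\Vert_{K^\circ}$, the layer-cake formula gives
\[
\int_{\R^n}\Pi^\circ f(z)\,dz=\int_{\R^n}e^{-\Vert z\Vert_{K^\circ}}\,dz=\int_0^\infty e^{-t}\vol(tK^\circ)\,dt=n!\,\vol(K^\circ),
\]
and, since the radial function of $K^\circ$ equals $1/h_{\Pi f}$, integration in polar coordinates yields $\vol(K^\circ)=\tfrac1n\int_{\mathbb S^{n-1}}h_{\Pi f}(u)^{-n}\,du$, so that $\int_{\R^n}\Pi^\circ f(z)\,dz=(n-1)!\int_{\mathbb S^{n-1}}h_{\Pi f}(u)^{-n}\,du$.

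Next, treat the first factor via the Cauchy-type identity $|v|=\tfrac1{2\omega_{n-1}}\int_{\mathbb S^{n-1}}|\langle v,u\rangle|\,du$ (which reduces, after a rotation, to $\int_{\mathbb S^{n-1}}|u_1|\,du=2\omega_{n-1}$) together with Fubini's theorem:
\[
\int_{\R^n}|\nabla f(x)|\,dx=\frac1{2\omega_{n-1}}\int_{\mathbb S^{n-1}}\Bigl(\int_{\R^n}|\langle\nabla f(x),u\rangle|\,dx\Bigr)du=\frac1{\omega_{n-1}}\int_{\mathbb S^{n-1}}h_{\Pi f}(u)\,du.
\]
Finally, use the sharp inequality: for every positive measurable $\psi$ on $\mathbb S^{n-1}$,
\[
\Bigl(\int_{\mathbb S^{n-1}}\psi\,du\Bigr)^n\Bigl(\int_{\mathbb S^{n-1}}\psi^{-n}\,du\Bigr)\ \ge\ \Bigl(\int_{\mathbb S^{n-1}}du\Bigr)^{n+1}=(n\omega_n)^{n+1},
\]
with equality precisely when $\psi$ is a.e.\ constant; this is H\"older's inequality applied to the factorization $1=\psi^{n/(n+1)}\cdot\psi^{-n/(n+1)}$ with conjugate exponents $\tfrac{n+1}{n}$ and $n+1$, raised to the power $n+1$. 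Taking $\psi=h_{\Pi f}|_{\mathbb S^{n-1}}$ and combining with the two displays above,
\[
\Bigl(\int_{\R^n}|\nabla f|\,dx\Bigr)^n\int_{\R^n}\Pi^\circ f(z)\,dz=\frac{(n-1)!}{\omega_{n-1}^n}\Bigl(\int_{\mathbb S^{n-1}}h_{\Pi f}\,du\Bigr)^n\int_{\mathbb S^{n-1}}h_{\Pi f}^{-n}\,du,
\]
which by the H\"older bound above is at least $(n-1)!\,(n\omega_n)^{n+1}/\omega_{n-1}^n=n!\,\omega_n\bigl(\tfrac{n\omega_n}{\omega_{n-1}}\bigr)^n$, the asserted inequality.

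For the equality clause, if $f(x)=g(|x|)$ then substituting $x\mapsto Rx$ (with $R$ a rotation) in the defining integral shows $h_{\Pi f}(Ru)=h_{\Pi f}(u)$, so $h_{\Pi f}$ is constant on $\mathbb S^{n-1}$ (equivalently, $\Pi f$ is a Euclidean ball); this is exactly the equality case of the H\"older step, so equality holds throughout. I do not anticipate a deep obstacle: once the two analytic quantities are expressed through $h_{\Pi f}$, a one-line H\"older inequality finishes everything. The step demanding the most care is the first one — checking that $h_{\Pi f}$ is a genuine support function of a convex body with the origin in its interior, so that the Legendre-transform definition of the polar function collapses to $e^{-h_{\Pi f}}$, and that $\int_{\R^n}|\langle\nabla f(x),u\rangle|\,dx$ is finite and positive for every $u\in\mathbb S^{n-1}$, so that none of the quantities in play degenerates.
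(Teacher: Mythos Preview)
Your argument is correct and reaches the same conclusion with the same equality characterisation, but the route differs from the paper's. The paper writes $\int_{\R^n}\Pi^\circ f(z)\,dz$ in polar coordinates as $n\omega_n\int_0^\infty r^{n-1}\bigl(\int_{\mathbb S^{n-1}}e^{-r\,h_{\Pi f}(u)}\,d\mu(u)\bigr)\,dr$, applies Jensen's inequality for the convex function $t\mapsto e^t$ to the spherical average \emph{for each fixed} $r$, and only afterwards evaluates the radial integral. You instead carry out the radial integration first, collapsing $\int\Pi^\circ f$ to $(n-1)!\int_{\mathbb S^{n-1}}h_{\Pi f}^{-n}$ (this is essentially the paper's Proposition~\ref{prop:Integral_and_volume}), and then close with the H\"older/Jensen inequality $\bigl(\int_{\mathbb S^{n-1}}\psi\bigr)^n\int_{\mathbb S^{n-1}}\psi^{-n}\ge(n\omega_n)^{n+1}$, i.e.\ Jensen for $t\mapsto t^{-n}$ rather than for $e^t$. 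Both approaches ultimately hinge on a single convexity inequality on the sphere and yield the identical equality condition ``$h_{\Pi f}$ constant on $\mathbb S^{n-1}$''. Your version has the advantage of making the geometric content transparent: the inequality is really a statement about the convex body $K$ with support function $h_{\Pi f}$, namely $\bigl(\int_{\mathbb S^{n-1}}h_K\bigr)^n\vol(K^\circ)\ge n^{n-1}\omega_n^{n+1}$, so any $f$ whose Petty projection body happens to be a ball gives equality. The paper's version avoids having to argue that $h_{\Pi f}$ is a genuine support function of a body with the origin in its interior --- the point you correctly flag as needing the most care --- since Jensen is applied directly to the integrand.
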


In the next proposition we collect some useful computations needed in this paper, refereed to characteristic functions and exponential gauges of convex bodies.

\begin{prop}\label{prop:identities}
Let $K\in\mathcal K^n$. Then we have that
\begin{enumerate}[i)]
\item $h_{\chi_K}=h_K$.
\item \label{prop:hOfExp} $h_{e^{-\Vert\cdot\Vert_K}}=I^\infty_{K^\circ}$.
\item \label{prop:proj} $h_{\Pi \chi_K}=h_{\Pi K}$.
\item \label{prop:projcharact}$\Pi\chi_K=e^{-I^\infty_{\Pi K}}=\chi_{\Pi K}$.
\item \label{prop:polar_proj} $\Pi^\circ\chi_K=e^{-h_{\Pi K}}$.
\item \label{eq:suppprojexp}$h_{\Pi e^{-\Vert\cdot\Vert_K}}=(n-1)!h_{\Pi K}$.
\item \label{prop:vol} $\int_{\mathbb R^n}e^{-\Vert x\Vert_K}dx=n!\mathrm{vol}(K)=n!\int_{\mathbb R^n}\chi_K(x)dx$.
\item \label{eq:intprojpolarexp}$\int_{\mathbb R^n}\Pi^\circ (e^{-\Vert \cdot\Vert_K})(x)dx=n!\Gamma(n)^{-n}\mathrm{vol}(\Pi^\circ K)=\Gamma(n)^{-n}\int_{\mathbb R^n}\Pi^\circ (\chi_K)(x)dx$.
\end{enumerate}
\end{prop}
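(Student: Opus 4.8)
The plan is to establish the eight identities in the order (i), (ii), (vii), then (iii)--(v), then (vi), and finally (viii), since each later item is assembled from earlier ones. Items (i) and (ii) are immediate from the conventions: as $\chi_K=e^{-I_K^\infty}$, the definition $h_f=\varphi^*$ (for $f=e^{-\varphi}$) and \eqref{eq:supportfuction} give $h_{\chi_K}=(I_K^\infty)^*=h_K$; and, writing $e^{-\|\cdot\|_K}=e^{-\varphi}$ with $\varphi=\|\cdot\|_K$, the standard gauge--support duality $\|\cdot\|_K=h_{K^\circ}$ (valid when $0\in\mathrm{int}\,K$) together with $h_L^*=I_L^\infty$ yields $h_{e^{-\|\cdot\|_K}}=\varphi^*=(h_{K^\circ})^*=I_{K^\circ}^\infty$. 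For (vii) I would pass to polar coordinates and use $\int_0^\infty e^{-as}s^{n-1}\,ds=(n-1)!\,a^{-n}$ together with $\vol(K)=\tfrac1n\int_{\sphere^{n-1}}\|\theta\|_K^{-n}\,d\theta$ to obtain $\int_{\mathbb R^n}e^{-\|x\|_K}\,dx=(n-1)!\int_{\sphere^{n-1}}\|\theta\|_K^{-n}\,d\theta=n!\,\vol(K)$, while $\int_{\mathbb R^n}\chi_K=\vol(K)$ needs nothing.

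For (iii), I would plug the identity $\int_{\mathbb R^n}|\langle\nabla\chi_K(x),u\rangle|\,dx=\int_{\sphere^{n-1}}|\langle u,v\rangle|\,dS(K,v)$ quoted above (from \cite{zhang1999affine}) into the definition of $h_{\Pi f}$ and compare with \eqref{eq:supportProj}, getting $h_{\Pi\chi_K}=h_{\Pi K}$. Since $h_{\Pi K}$ is the support function of the convex body $\Pi K$, a further use of $h_L^*=I_L^\infty$ gives $\Pi\chi_K=e^{-(h_{\Pi K})^*}=e^{-I_{\Pi K}^\infty}=\chi_{\Pi K}$, which is (iv); and (v) is the polarity computation $(\chi_L)^\circ=(e^{-I_L^\infty})^\circ=e^{-(I_L^\infty)^*}=e^{-h_L}$ applied with $L=\Pi K$, since $\Pi^\circ f=(\Pi f)^\circ$.

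The main work is (vi). I would start from $\nabla e^{-\|x\|_K}=-e^{-\|x\|_K}\nabla\|x\|_K$, an honest $L^1$ function because $\|\cdot\|_K$ is Lipschitz, and use that $\|\cdot\|_K$ is positively $1$-homogeneous (so $\nabla\|\cdot\|_K$ is $0$-homogeneous); passing to polar coordinates and using the same Gamma integral,
\[
h_{\Pi e^{-\|\cdot\|_K}}(u)=\frac12\int_{\mathbb R^n}e^{-\|x\|_K}|\langle\nabla\|x\|_K,u\rangle|\,dx=\frac{(n-1)!}{2}\int_{\sphere^{n-1}}\frac{|\langle\nabla\|\theta\|_K,u\rangle|}{\|\theta\|_K^{\,n}}\,d\theta .
\]
It then remains to identify the last spherical integral with $\int_{\sphere^{n-1}}|\langle u,v\rangle|\,dS(K,v)=2h_{\Pi K}(u)$. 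I would do this via the cone-volume change of variables between $\sphere^{n-1}$ and $\partial K$ given by $\theta\mapsto\|\theta\|_K^{-1}\theta$: comparing the volume of a cone over a boundary patch computed in the two coordinate systems gives the surface element $d\mathcal H^{n-1}=\|\theta\|_K^{-(n-1)}\langle\theta,\nu_K\rangle^{-1}\,d\theta$, while on $\partial K=\{\|\cdot\|_K=1\}$ the outer normal is $\nu_K=\nabla\|\cdot\|_K/|\nabla\|\cdot\|_K|$ and Euler's identity gives $\langle\theta,\nabla\|\theta\|_K\rangle=\|\theta\|_K$; substituting converts $\int_{\partial K}|\langle\nu_K(x),u\rangle|\,d\mathcal H^{n-1}(x)$ into the spherical integral above, and the left-hand side equals $\int_{\sphere^{n-1}}|\langle u,v\rangle|\,dS(K,v)$ because $S(K,\cdot)$ is the image of $\mathcal H^{n-1}|_{\partial K}$ under the Gauss map. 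Hence $h_{\Pi e^{-\|\cdot\|_K}}=(n-1)!\,h_{\Pi K}=\Gamma(n)h_{\Pi K}$.

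Finally (viii) is put together from the pieces: by (vi), $h_{\Pi e^{-\|\cdot\|_K}}=h_{\Gamma(n)\Pi K}$, so arguing exactly as in (iv)--(v) one gets $\Pi^\circ(e^{-\|\cdot\|_K})=(\chi_{\Gamma(n)\Pi K})^\circ=e^{-\Gamma(n)h_{\Pi K}}=e^{-\|\cdot\|_{\Gamma(n)^{-1}\Pi^\circ K}}$ (using $h_{\Pi K}=\|\cdot\|_{\Pi^\circ K}$); applying (vii) to the body $\Gamma(n)^{-1}\Pi^\circ K$ gives $\int_{\mathbb R^n}\Pi^\circ(e^{-\|\cdot\|_K})(x)\,dx=n!\,\vol(\Gamma(n)^{-1}\Pi^\circ K)=n!\,\Gamma(n)^{-n}\vol(\Pi^\circ K)$, and by (v) together with (vii) applied to $\Pi^\circ K$, $\int_{\mathbb R^n}\Pi^\circ(\chi_K)(x)\,dx=\int_{\mathbb R^n}e^{-\|x\|_{\Pi^\circ K}}\,dx=n!\,\vol(\Pi^\circ K)$, which gives the claimed chain of equalities. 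I expect the one genuine obstacle to be the regularity hidden in (vi): for a general convex body $\|\cdot\|_K$ is differentiable only $\mathcal H^{n-1}$-a.e.\ on $\partial K$ and the cone-volume surface element formula is valid only a.e., so the change of variables must be justified with some care, e.g.\ by first proving (vi) for bodies with $\class^2$ positively curved boundary and then approximating, both sides being continuous under such approximation.
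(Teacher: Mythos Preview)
Your argument is correct and, for (i), (ii), (iv), (v), and (viii), proceeds exactly as the paper does. Three items are handled differently, in complementary ways.

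For (iii), the paper does not simply invoke the distributional identity from \cite{zhang1999affine}; it reproves it by approximating $\chi_K$ by the Lipschitz functions $f_\varepsilon(x)=\max\{0,1-\varepsilon^{-1}d(x,K)\}$, computing $\nabla f_\varepsilon=-\varepsilon^{-1}\nu(x')$ explicitly on the shell $\{0<d(x,K)<\varepsilon\}$, and letting $\varepsilon\to0$ to recover $\tfrac12\int_{\partial K}|\langle\nu,y\rangle|\,d\sigma$. Your citation is legitimate, but the paper chose to be self-contained here. Conversely, for (vi) the paper only cites \cite[Prop.~5.1]{fang2018lyz}, whereas you give a full proof; your polar-coordinates computation and the cone-volume Jacobian $d\mathcal H^{n-1}=\|\theta\|_K^{-(n-1)}\langle\theta,\nu_K\rangle^{-1}d\theta$ combined with Euler's relation are correct, though a marginally shorter route is the coarea formula on the level sets $\{\|\cdot\|_K=t\}=t\,\partial K$ (this is exactly the device the paper uses later, in the proof of its Corollary, to compute $\int|\nabla e^{-\|\cdot\|_K}|$). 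For (vii) the paper uses the layer-cake representation $\int e^{-\|x\|_K}dx=\int_0^1\vol(\{\|x\|_K\le-\log t\})\,dt$ rather than polar coordinates; both are equally standard. Your closing remark about $\mathcal H^{n-1}$-a.e.\ differentiability and $\class^2$ approximation in (vi) is apt; the paper sidesteps this by delegating to the cited reference.
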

Let us observe that Theorem 5.2 in \cite{fang2018lyz} is not correct. Indeed, using Proposition \ref{prop:identities},
one can verify that if $f=\chi_{tB^n_2}$, for some $t>0$, then the Theorem 5.2 in \cite{fang2018lyz} becomes
$(1-\log t)^{n-1}\leq C(n)$, for some constant $C(n)>0$ only depending on the dimension $n$. Later on, we will discuss how to
correct those bounds for the integral of the Petty projection function.

One can also bound from above the term $\int|\nabla f|$ by means of an \emph{entropic function} under some extra assumptions;
indeed, if $a\chi_{B^n_2}\leq f$ for some $f\in\mathcal F(\mathbb R^n)$ and $a>0$, then
\begin{equation}\label{eq:entropic}
\int_{\mathbb R^n}|\nabla f(x)|dx\leq n\int_{\mathbb R^n}f(y)dy+\int_{\mathbb R^n}f(z)\log\frac{f(z)}{a\Vert f\Vert_\infty}dz,
\end{equation}
with equality if $f=a\chi_{B^n_2}$ (cf.~\cite{alonso2017john}, see also \cite{bobkov2012reverse}).

The quantity $\mathrm{vol}(K)^{n-1}\mathrm{vol}(\Pi^\circ K)$ is an affine invariant,
its maximum value is provided by \emph{Petty's projection inequality} \cite{petty1971isoperimetric}, with equality if and only if $K$ is an ellipsoid,
and its minimum value is given by \emph{Zhang's inequality} \cite{zhang1991restricted}, with equality if and only if $K$ is a simplex:
\begin{equation}\label{eq:Petty and Zhang}
\frac{{2n\choose n}}{n^n}\leq\mathrm{vol}(K)^{n-1}\mathrm{vol}(\Pi^\circ K)\leq\left(\frac{\omega_n}{\omega_{n-1}}\right)^n.
\end{equation}
For any $f\in\mathcal F(\mathbb R^n)$, $f=e^{-\varphi}$, let $\Pi_bf$ be the \emph{Petty projection body} of $f$, which is the convex body
whose support function is given by
\[
h_{\Pi_bf}(y)=\int_{supp\,f}|\langle \nabla\varphi(x),y\rangle|f(x)dx,\quad\text{i.e.}\quad h_{\Pi_bf}=2h_{\Pi f}.
\]
In order to avoid future confusion, here we have changed the original name also
given by Fang and Zhou \cite{fang2018lyz} (they used the name $\Pi f$, and we insert the subindex $b$ to stress that it is a body).
Its polar $\Pi^\circ_bf$ was firstly introduced in \cite{alonso2017john}, and here once more we change the old naming $\Pi^\circ f$ by $\Pi^\circ_bf$,
and it is the unit ball of the norm given by
\[
\Vert x\Vert_{\Pi_b^\circ f}=\int_{\mathbb R^n}|\langle\nabla f(x),y\rangle|dx.
\]
Since $f=e^{-\varphi}$, due to $\nabla f(x)=-e^{-\varphi(x)}\nabla\varphi(x)$,
we have that $\Pi^\circ_bf=(\Pi_bf)^\circ$, as one may expect.
The right-hand side of \eqref{eq:Petty and Zhang} was extended to functional settings
by Zhang \cite{zhang1999affine} and it is also known as the affine Sobolev inequality, whereas the left-hand side of \eqref{eq:Petty and Zhang}
was recently extended to log-concave functions in \cite{alonso2018zhang},

\begin{equation}\label{eq:functional petty zhang}
\frac{2^{-n}}{n!}\Vert f\Vert_1^{-n-1}\int_{\mathbb R^n}\int_{\mathbb R^n}\min\{f(x),f(y)\}dxdy\leq\mathrm{vol}(\Pi^\circ_bf)\leq\left(\frac{\omega_n}{2\omega_{n-1}}\right)^n\Vert f\Vert_{\frac{n}{n-1}}^{-n}.
\end{equation}

Moreover, equality holds on the right-hand side if and only if $\frac{f}{\Vert f\Vert_\infty}=\chi_{AB^n_2}$, for any regular $A\in\mathbb R^{n\times n}$,
and on the left hand side if and only if $\frac{f}{\Vert f\Vert_\infty}=e^{-\Vert \cdot\Vert_S}$, for any simplex $S\in\mathcal K^n$, with $0 \in S$.
One can immediately verify that

\begin{equation}\label{eq:polar_b}
\Pi^\circ f(y)=e^{-h_{\Pi f}(y)}=e^{-\frac12h_{\Pi_bf}(y)}=e^{-\frac12\Vert y\Vert_{\Pi^\circ_bf}},
\end{equation}

and thus, \eqref{eq:functional petty zhang} can be used to give optimal bounds of the integral of $\Pi^\circ f$ for any $f\in\mathcal F(\mathbb R^n)$.
\begin{prop}\label{prop:Integral_and_volume}
Let $f\in\mathcal F(\mathbb R^n)$. Then
\[
\int_{\mathbb R^n}\Pi^\circ f(x)dx=2^nn!\mathrm{vol}(\Pi^\circ_bf).
\]
\end{prop}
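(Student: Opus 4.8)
The plan is to reduce the claimed identity to the elementary formula relating the integral of an exponential gauge to the volume of its defining convex body, namely $\int_{\mathbb R^n}e^{-\Vert x\Vert_K}\,dx=n!\,\mathrm{vol}(K)$ (item vii) of Proposition \ref{prop:identities}). The starting point is \eqref{eq:polar_b}, which already writes $\Pi^\circ f$ as an exponential gauge, $\Pi^\circ f(y)=e^{-\frac12\Vert y\Vert_{\Pi^\circ_bf}}$, with $\Pi^\circ_bf$ the convex body (having the origin in its interior) defined just before the statement; in the degenerate case where $\Pi^\circ_bf$ fails to be bounded both sides of the asserted equality equal $+\infty$, so we may and do assume $\Pi^\circ_bf\in\mathcal K^n$.

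First I would absorb the factor $\frac12$ into the body. For every $K\in\mathcal K^n$ with $0\in\mathrm{int}(K)$ and every $\lambda>0$ one has $\Vert x\Vert_{\lambda K}=\lambda^{-1}\Vert x\Vert_K$ directly from the definition of the gauge, so taking $\lambda=2$ and $K=\Pi^\circ_bf$ yields $\Pi^\circ f(y)=e^{-\Vert y\Vert_{2\Pi^\circ_bf}}$. Applying the quoted identity of Proposition \ref{prop:identities} to the convex body $2\Pi^\circ_bf$ then gives
\[
\int_{\mathbb R^n}\Pi^\circ f(y)\,dy=\int_{\mathbb R^n}e^{-\Vert y\Vert_{2\Pi^\circ_bf}}\,dy=n!\,\mathrm{vol}\bigl(2\Pi^\circ_bf\bigr)=2^n n!\,\mathrm{vol}\bigl(\Pi^\circ_bf\bigr),
\]
where the last step uses the $n$-homogeneity of the Lebesgue measure. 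This is precisely the assertion.

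Since the argument amounts to a single change of normalization, I do not expect a real obstacle; the only point worth a word of care is the well-definedness of $\Pi^\circ_bf$ as an element of $\mathcal K^n$ (equivalently, the finiteness and strict positivity of $h_{\Pi f}$ on $\mathbb S^{n-1}$), together with the observation that when this fails the identity holds trivially with both sides infinite. Should one wish to avoid invoking Proposition \ref{prop:identities}, the same computation runs directly via the layer-cake formula: $\int_{\mathbb R^n}e^{-\frac12\Vert y\Vert_{\Pi^\circ_bf}}\,dy=\int_0^1\mathrm{vol}\{y:\Vert y\Vert_{\Pi^\circ_bf}<-2\log t\}\,dt$, and since $\{y:\Vert y\Vert_{\Pi^\circ_bf}<s\}=s\,\mathrm{int}(\Pi^\circ_bf)$ has volume $s^n\mathrm{vol}(\Pi^\circ_bf)$ while $\int_0^1(-2\log t)^n\,dt=2^n\Gamma(n+1)=2^nn!$, one recovers the same value.
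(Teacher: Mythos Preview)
Your argument is correct and essentially identical to the paper's: both invoke \eqref{eq:polar_b} to write $\Pi^\circ f(y)=e^{-\frac12\Vert y\Vert_{\Pi^\circ_bf}}$ and then apply item \ref{prop:vol}) of Proposition \ref{prop:identities}, the only cosmetic difference being that the paper pulls out the factor $2^n$ by a change of variables while you absorb the $\frac12$ into the body as $2\Pi^\circ_bf$. Your remarks on the degenerate case and the alternative layer-cake computation are fine additions but not needed for the core argument.
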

After writing this note, Fang and Zhou have told us in personal communication that they also noticed their mistake; however, it
seems that they have amended it replacing it by the right-hand side of \eqref{eq:functional petty zhang} and using Proposition
\ref{prop:Integral_and_volume}.

\section{Proofs}

We start this section by proving Proposition \ref{prop:identities}.

\begin{proof}[Proof of Proposition \ref{prop:identities}]
\begin{enumerate}[i)]
\item See \eqref{eq:supportfuction}.

\item It is a direct consequence of $\Vert\cdot\Vert_K^*=I^\infty_{K^\circ}$.

\item
Here we use a similar argument to the one exhibited in \cite[\S4]{zhang1999affine}.
Let us denote by $d(x,A)$ the Euclidean distance from a point $x \in \mathbb{R}^n$ to a set $A \subset \mathbb{R}^n$.
Let $\varepsilon > 0$ and define
\begin{equation}\nonumber 
f_{\varepsilon}(x) = \left\{
\begin{array}{lr}
0 & \text{if }  d(x,K) \geq \varepsilon\\
1 - \frac{d(x, K)}{\varepsilon} & \text{if }  d(x,K) < \varepsilon
\end{array} \right.,
\end{equation}
If $d(x, K)>0$ for some $x\in\mathbb R^n$, then there exists a unique $x' \in \partial K$ such that $d(x, K) = |x - x'|$. Let $\nu(x')=\frac{x-x'}{|x - x'|}$ be the outer normal of $K$ at $x'$ and let $D_{\varepsilon} = \{x \in \mathbb{R}^n: 0< d(x, K) < \varepsilon\}$. Then
\begin{equation}\nonumber 
\nabla f_{\varepsilon}(x) = \left\{
\begin{array}{lr}
-\varepsilon^{-1}\nu(x') & \text{if } x \in D_{\varepsilon}\\
0 & \text{otherwise}\\
\end{array} \right.,
\end{equation}
from which
\begin{equation*}
\frac{1}{2}\int_{\mathbb{R}^n}|\langle \nabla f_{\varepsilon}(x), y \rangle|dx = \frac{1}{2}\int_{D_{\varepsilon}}|\langle \varepsilon^{-1}\nu(x'), y\rangle|dx
 = \frac{\varepsilon^{-1}}{2}\int_{D_{\varepsilon}}|\langle \nu(x'), y\rangle|dx.
\end{equation*}

When $\varepsilon \rightarrow 0$, we have that
\begin{equation*}
	\frac{\varepsilon^{-1}}{2}\int_{D_{\varepsilon}}|\langle \nu(x'), y\rangle|dx \rightarrow \frac{1}{2}\int_{\partial K}|\langle \nu (x'), y \rangle|d\sigma(\partial K,x'),
\end{equation*}
where $d\sigma(\partial K,\cdot)$ is the surface area element of $\partial K$. Since $\lim_{\varepsilon\rightarrow 0}f_{\varepsilon}=\chi_K$
and by (\ref{eq:supportProj}) we can conclude that
\begin{align*}
	h_{\Pi \chi_K}(y) & = \lim_{\varepsilon \rightarrow 0}\frac{1}{2}\int_{\mathbb{R}^n}|\langle \nabla f_{\varepsilon}(x), y \rangle|dx \\
	& = \frac{1}{2}\int_{\partial K}|\langle \nu (x'), y \rangle|d\sigma(\partial K,x') \\
	& = \frac{1}{2}\int_{\mathbb{S}^{n-1}}|\langle u, y \rangle|dS(K, u) \\
	& = h_{\Pi K}(y).
\end{align*}

\item Since by definition $\Pi\chi_K(x)=e^{-h_{\Pi\chi_K}^*(x)}$, by \ref{prop:proj}) we can conclude that
\[
\Pi\chi_K(x)=e^{-h^*_{\Pi K}(x)}=e^{-I^\infty_{\Pi K}(x)}=\chi_{\Pi K}(x).
\]

\item Using \ref{prop:proj}) we have that $\Pi^{\circ}\chi_K(x) = e^{-h_{\Pi\chi_K}(x)} = e^{-h_{\Pi K}(x)}$, as desired.

\item See \cite[Prop.~5.1]{fang2018lyz}.

\item By definition, $\int_{\mathbb R^n}\chi_K(x)dx=\mathrm{vol}(K)$. Second,
\[
\begin{split}
\int_{\mathbb R^n}e^{-\Vert x\Vert_K}dx & = \int_0^1\mathrm{vol}(\{x\in\mathbb R^n:e^{-\Vert x\Vert_K}\geq t\})dt\\
& =\int_0^1\mathrm{vol}(\{x\in\mathbb R^n:\Vert x\Vert_K\leq -\log t\})dt\\
& =\mathrm{vol}(K)\int_0^1\left(\log \frac{1}{t}\right)^ndt\\
& =\mathrm{vol}(K)\int_0^\infty s^ne^{-s}ds\\
&=\Gamma(n+1)\mathrm{vol}(K).
\end{split}
\]

\item On the one hand, using \ref{prop:polar_proj}) and \ref{prop:vol}) we immediately get that
\[
\int_{\mathbb R^n}\Pi^\circ\chi_K(x)dx=\int_{\mathbb R^n}e^{-h_{\Pi K}(x)}dx=\int_{\mathbb R^n}e^{-\Vert x\Vert_{\Pi^\circ K}(x)}dx
=n!\mathrm{vol}(\Pi^\circ K).
\]
On the other hand, using \ref{eq:suppprojexp}) and \ref{prop:vol}) and the 1-homogeneity of the support function we can conclude that
\[
\begin{split}
\int_{\mathbb R^n}\Pi^\circ(e^{-\Vert\cdot\Vert_K})(x)dx & = \int_{\mathbb R^n}e^{-h_{\Pi e^{-\Vert\cdot\Vert_K}}(x)}dx\\
& =\int_{\mathbb R^n}e^{-\Gamma(n)h_{\Pi K}(x)}dx\\
& =\Gamma(n)^{-n}\int_{\mathbb R^n}e^{-h_{\Pi K}(x)}dx\\
&=\Gamma(n)^{-n}n!\mathrm{vol}(\Pi^\circ K).
\end{split}
\]
\end{enumerate}
\end{proof}

We now prove Theorem \ref{thm:polar}. The main ingredients of it are the integration by polar
coordinates and the \emph{Jensen inequality} \cite{artstein2015asymptotic}, which states that if $(X, \Sigma, \mu)$ is a probability space, then for any convex function $\varphi:\mathbb R\rightarrow\mathbb R$ and any $\mu$-integrable function $f:X \rightarrow\mathbb R$, we have that
\[
\varphi\left(\int_{X}f(x)d\mu(x)\right)\leq\int_{X}\varphi\circ f(x)d\mu(x),
\]
and moreover, equality holds if and only if either $\varphi$ is affine or $f$ is independent of $x$.
One can compare the proof below to the one in \cite[Thm.~5.2]{fang2018lyz}, where we have detected mistakes in (5.17) at the change of variables and
at the application of Jensen inequality.

\begin{proof}[Proof of Theorem \ref{thm:polar}]
Let $f=e^{-\varphi}$. Since $\nabla f(x)=-f(x)\nabla\varphi(x)$ and using polar coordinates, we can write
\[
\begin{split}
\int_{\mathbb R^n}\Pi^\circ f(z)dz & =\int_{\mathbb R^n}e^{-\frac12\int_{\mathbb R^n}|\langle \nabla f(x),z\rangle|dx}dz \\
&=n\omega_n\int_0^\infty\int_{\mathbb S^{n-1}}e^{-\frac{r}{2}\int_{\mathbb R^n}|\langle\nabla f(x),u\rangle|dx}r^{n-1}d\mu(u)dr,
\end{split}
\]
where $\mu$ is the uniform probability measure in $\mathbb S^{n-1}$.
Since $e^x$ is convex, Jensen inequality implies that
\[
\exp\left(\int_{\mathbb S^{n-1}}-\frac r2 \int_{\mathbb R^n}|\langle\nabla f(x),u\rangle|dxd\mu(u)\right)
\leq
\int_{\mathbb S^{n-1}}\exp\left(-\frac{r}{2}\int_{\mathbb R^n}|\langle\nabla f(x),u \rangle|dx\right)d\mu(u).
\]
Using Fubini and using the fact that
\[
\int_{\mathbb S^{n-1}}|\langle v,u\rangle|d\mu(u)=\frac{2\omega_{n-1}}{n\omega_n}|v|,
\]
for any $v\in\mathbb R^n$, then
\[
\begin{split}
\int_{\mathbb R^n}\Pi^\circ f(z)dz & \geq n\omega_n\int_0^\infty e^{-\frac r2 \int_{\mathbb R^n}\int_{\mathbb S^{n-1}}|\langle\nabla f(x),u\rangle|d\mu(u)dx}r^{n-1}dr \\
&= n\omega_n\int_0^\infty e^{-\frac r2 \int_{\mathbb R^n}\frac{2\omega_{n-1}}{n\omega_n}|\nabla f(x)|dx}r^{n-1}dr\\
&=n\omega_n\int_0^\infty e^{-\frac{\omega_{n-1}}{n\omega_n}\Vert\nabla f\Vert_1 r}r^{n-1}dr.
\end{split}
\]
Letting $t=\frac{\omega_{n-1}}{n\omega_n}\Vert\nabla f\Vert_1r=ar$, then $dt=adr$ and
\[
\int_0^\infty e^{-ar}r^{n-1}dr=\int_0^\infty e^{-t}t^{n-1}a^{1-n}a^{-1}dt=a^{-n}\Gamma(n).
\]
We can thus conclude that
\[
\begin{split}
\int_{\mathbb R^n}\Pi^\circ f(z)dz & \geq n!\omega_n\left(\frac{\omega_{n-1}}{n\omega_n}\Vert\nabla f\Vert_1\right)^{-n}\\
& =n!\omega_n\left(\frac{\omega_{n-1}}{n\omega_n}\right)^{-n}\left(\int_{\mathbb R^n}|\nabla f(x)|dx\right)^{-n}.
\end{split}
\]

In the equality case, there must be equality in the inequality above. Hence, by Jensen's equality case,
we must have that $\int_{\mathbb R^n}|\langle\nabla f(x),u\rangle|dx$ is independent of $u\in\mathbb S^{n-1}$. In particular,
if $f(x)=g(|x|)$ for some $g:[0,\infty)\rightarrow[0,\infty)$ log-concave and every $x\in\mathbb R^n$, as desired.
\end{proof}

A geometrical consequence of Theorem \ref{thm:polar} is the following result (cf.~\cite{zhang1999affine}), which relates the
\emph{surface area measure} $S(K)$ of a $K\in\mathcal K^n$ with the volume $\mathrm{vol}(\Pi^\circ K)$, and can be also
obtained by H\"older inequality in \eqref{eq:Petty and Zhang}.
\begin{cor}
Let $K\in\mathcal K^n$. Then
\[
S(K)^n\mathrm{vol}(\Pi^\circ K)\geq\omega_n\left(\frac{n\omega_n}{\omega_{n-1}}\right)^n.
\]
Moreover, equality holds if $K=B^n_2$.
\end{cor}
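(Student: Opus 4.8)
The plan is to apply Theorem \ref{thm:polar} to the characteristic function $f=\chi_K$ and then to recognise the two integral quantities it involves as geometric data of $K$ by means of Proposition \ref{prop:identities}. On the one hand, by Proposition \ref{prop:identities}~\ref{prop:polar_proj}) together with the first computation in \ref{eq:intprojpolarexp}),
\[
\int_{\mathbb R^n}\Pi^\circ\chi_K(z)\,dz=\int_{\mathbb R^n}e^{-h_{\Pi K}(z)}\,dz=n!\,\mathrm{vol}(\Pi^\circ K).
\]
On the other hand, $\int_{\mathbb R^n}|\nabla\chi_K(x)|\,dx$ ought to be the total surface area $S(K)=S(K,\mathbb S^{n-1})$; since $\chi_K$ is not differentiable, I would make this precise exactly as in the proof of Proposition \ref{prop:identities}~\ref{prop:proj}), replacing $\chi_K$ by the Lipschitz, log-concave, compactly supported approximations $f_\varepsilon$ defined there. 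Since $|\nabla f_\varepsilon|=\varepsilon^{-1}$ on $D_\varepsilon$ and $0$ elsewhere,
\[
\int_{\mathbb R^n}|\nabla f_\varepsilon(x)|\,dx=\varepsilon^{-1}\mathrm{vol}(D_\varepsilon)=\varepsilon^{-1}\bigl(\mathrm{vol}(K+\varepsilon B^n_2)-\mathrm{vol}(K)\bigr)\xrightarrow[\varepsilon\to0^+]{}S(K),
\]
this last limit being the first variation of volume.

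Next I would apply Theorem \ref{thm:polar} to each $f_\varepsilon\in\mathcal F(\mathbb R^n)$, which gives
\[
\Bigl(\int_{\mathbb R^n}|\nabla f_\varepsilon(x)|\,dx\Bigr)^n\int_{\mathbb R^n}\Pi^\circ f_\varepsilon(z)\,dz\ge\omega_n\,n!\Bigl(\frac{n\omega_n}{\omega_{n-1}}\Bigr)^n ,
\]
and let $\varepsilon\to0^+$. The first factor tends to $S(K)^n$ by the above. For the second, the proof of Proposition \ref{prop:identities}~\ref{prop:proj}) shows that $h_{\Pi f_\varepsilon}\to h_{\Pi K}$ (uniformly on $\mathbb S^{n-1}$, by weak convergence of the associated surface measures), hence $\Pi^\circ f_\varepsilon=e^{-h_{\Pi f_\varepsilon}}\to e^{-h_{\Pi K}}=\Pi^\circ\chi_K$ pointwise, and dominated convergence applies because $h_{\Pi f_\varepsilon}(z)\ge\tfrac12 h_{\Pi K}(z)\ge c|z|$ for all small $\varepsilon$, the constant $c>0$ existing since $\Pi K$ has the origin in its interior. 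Thus $\int\Pi^\circ f_\varepsilon\to\int\Pi^\circ\chi_K=n!\,\mathrm{vol}(\Pi^\circ K)$; passing to the limit in the displayed inequality and cancelling the factor $n!$ yields $S(K)^n\,\mathrm{vol}(\Pi^\circ K)\ge\omega_n(n\omega_n/\omega_{n-1})^n$.

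For the equality case $K=B^n_2$, I would note that $\chi_{B^n_2}$ is radial (and the corresponding $f_\varepsilon$ are as well), so the equality clause of Theorem \ref{thm:polar} applies; equivalently one checks directly that $\Pi B^n_2=\omega_{n-1}B^n_2$, whence $\mathrm{vol}(\Pi^\circ B^n_2)=\omega_{n-1}^{-n}\omega_n$, while $S(B^n_2)=n\omega_n$, so that the two sides of the inequality coincide. The only genuine obstacle is the limiting argument forced by the non-smoothness of $\chi_K$: one must simultaneously control $\int|\nabla f_\varepsilon|\to S(K)$ and the convergence of the finite integrals $\int e^{-h_{\Pi f_\varepsilon}}$ to $n!\,\mathrm{vol}(\Pi^\circ K)$. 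Should one wish to bypass this entirely, the corollary also follows by applying H\"older's inequality to pass from $S(K)$ to the left-hand side of \eqref{eq:Petty and Zhang} and then invoking Petty's projection inequality, as noted in the statement.
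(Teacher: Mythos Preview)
Your argument is essentially correct, but it differs from the paper's proof in the choice of test function. You specialize Theorem~\ref{thm:polar} to $f=\chi_K$ and are then forced to pass through the Lipschitz approximations $f_\varepsilon$ and a limiting argument, justifying both $\int|\nabla f_\varepsilon|\to S(K)$ and $\int\Pi^\circ f_\varepsilon\to n!\,\mathrm{vol}(\Pi^\circ K)$ via dominated convergence. The paper instead applies Theorem~\ref{thm:polar} to the exponential gauge $f(x)=e^{-\|x\|_K}$, which already lies in $\mathcal F(\mathbb R^n)$ and requires no approximation: a co-area computation over the level sets $t\partial K$ gives $\int_{\mathbb R^n}|\nabla e^{-\|x\|_K}|\,dx=\Gamma(n)\,S(K)$, while Proposition~\ref{prop:identities}~\ref{eq:intprojpolarexp}) gives $\int\Pi^\circ(e^{-\|\cdot\|_K})=n!\,\Gamma(n)^{-n}\mathrm{vol}(\Pi^\circ K)$, and the factors of $\Gamma(n)$ cancel. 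The paper's route is cleaner precisely because it sidesteps the convergence issues you have to work around; your route, on the other hand, stays closer to the geometric picture of $\chi_K$ and reuses the $f_\varepsilon$ machinery already in the proof of Proposition~\ref{prop:identities}~\ref{prop:proj}). Both are valid, and you correctly note the third alternative via H\"older and \eqref{eq:Petty and Zhang}, which the paper also mentions.
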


\begin{proof}
Let us particularize Theorem \ref{thm:polar} taking $f(x)=e^{-\Vert x\Vert_K}$.
If we denote by $d\sigma(\partial K,\cdot)$ the surface area element of $K$, then
\begin{align*}
\int_{\mathbb{R}^n}|\nabla f(x)|dx & = \int_{\mathbb{R}^n}e^{-||x||_K}|\nabla||x||_K|dx \\
 & = \int_{0}^{\infty}\int_{t\partial K}e^{-||x||_K} |\nabla||x||_K|\cdot|\nabla||x||_K|^{-1}d\sigma(t\partial K, x)dt \\
& = \int_{0}^{\infty}\int_{t\partial K}e^{-||x||_K}d\sigma(t\partial K, x)dt \\
& = \int_{0}^{\infty}\int_{\partial K}e^{-t} t^{n-1}d\sigma(\partial K, y)dt \\
& =  \int_{0}^{\infty}e^{-t}t^{n-1}dt\int_{\partial K}d\sigma(\partial K, y) \\
& = \int_{0}^{\infty}e^{-t}t^{n-1}dt S(K)\\
& = \Gamma(n)S(K).
\end{align*}
This, together with \ref{eq:intprojpolarexp}) in Proposition \ref{prop:identities}, imply that
\[
\begin{split}
\frac{n!}{(n-1)!^n}\mathrm{vol}(\Pi^\circ K) & = \int_{\mathbb R^n}\Pi^\circ(e^{-\Vert\cdot\Vert_K})(x)dx\\
& \geq n!\omega_n\left(\frac{\omega_{n-1}}{n\omega_n}\right)^{-n}\left(\int_{\mathbb R^n}|\nabla(e^{-\Vert \cdot\Vert_K})(x)|dx\right)^{-n}\\
& =n!\omega_n\left(\frac{\omega_{n-1}}{n\omega_n}\right)^{-n}(n-1)!^{-n}S(K)^{-n},
\end{split}
\]
as desired.

Equality holds if $e^{-\Vert x\Vert_K}$ is independent of $x\in\mathbb S^{n-1}$, for instance, if $K=B^n_2$.
\end{proof}

Now we show Proposition \ref{prop:Integral_and_volume}.

\begin{proof}[Proof of Proposition \ref{prop:Integral_and_volume}]
As a consequence of (\ref{eq:polar_b}) and \ref{prop:vol}) in Proposition \ref{prop:identities}, we obtain that
\[
\int_{\mathbb R^n}\Pi^\circ f(x)dx =\int_{\mathbb R^n}e^{-h_{\Pi f}(x)}dx
 =\int_{\mathbb R^n}e^{-\frac12\Vert x\Vert_{\Pi^\circ_bf}}dx
= 2^n\int_{\mathbb R^n}e^{-\Vert x\Vert_{\Pi^\circ_bf}}dx
=2^nn!\mathrm{vol}(\Pi^\circ_bf).
\]
\end{proof}

\section{Integrability of log-concave functions}

In this section we characterize the integrability of log-concave functions in terms of the value of $f$ over all possible rays.
Other characterizations of the integrability of log-concave functions were given in \cite{cordero2015moment}.
Before stating the next result, we would like to remember that for any log-concave function $f:\mathbb R^n\rightarrow[0,\infty)$,
the function
\[
g(x)=\left\{\begin{array}{cc}f(x) & \text{if }x\in\mathrm{int}(\mathrm{supp }f),\\
\limsup_{y\rightarrow x,y\in\mathrm{int}(\mathrm{supp }f)}f(y) & \text{if }x\in\partial\,\mathrm{supp }f,\\
0 & \text{otherwise} \end{array}\right.
\]
is log-concave, continuous on its support $\supp g=\supp f=\overline{\{x\in\mathbb R^n:f(x)>0\}}$, and has $\int g=\int f$ (see \cite[Lem.~2.1]{colesanti2006functional}). Thus, we can always replace $f$ by $g$ and hence we can always extend continuously $f$ to its support.

\begin{lem}\label{lem:integrable}
Let $f:\mathbb R^n\rightarrow [0,\infty)$ be log-concave with $f(x_M)=\|f\|_\infty$ for some $x_M\in\mathbb R^n$. Then the following are equivalent:
\begin{enumerate}
\item[(1)] $f$ is integrable. 
\item[(2)] Either $\int f=0$ or there exists no ray $R=x_0+\mathbb R_+u$, $x_0,u\in\mathbb R^n$, $u\neq 0$, for which $f|_R=c>0$.
\end{enumerate}
\end{lem}

\begin{proof}
We first prove (1) implies (2). Let us suppose that $\int f>0$. Hence, the function $f$ is non-zero in an open ball $B(x_0,r)$,
for some $x_0\in\mathbb R^n$ and $r>0$.
By continuity of $f$ in $B(x_0,r)$, let us suppose that $f(x)\geq \alpha$, for every $x\in B(x_0,r)$ and for some $\alpha>0$.
Moreover, for the sake of contradiction, let us suppose that there
exists a ray $R=y_0+\mathbb R_+u$, $y_0,u\in\mathbb R^n$, $u\neq 0$, such that $f|_R=c>0$. Let $z_0\in U=B(x_0,r)\cap(x_0+u^\bot)$ and $t>0$.
Let us observe that
\[
\left(1-\frac1k\right)z_0+\frac1k(y_0+tku)\rightarrow z_0+tu\quad\text{ if }\quad k\rightarrow\infty.
\]
Furthermore, we have that
\[
f\left(\left(1-\frac1k\right)z_0+\frac1k(y_0+tku)\right) \geq f(z_0)^{1-\frac1k}f(y_0+tku)^\frac1k \geq \alpha^{1-\frac1k}c^\frac1k,
\]
and thus that
\[
f(z_0+tu)=\lim_{k\rightarrow\infty}f\left(\left(1-\frac1k\right)z_0+\frac1k(y_0+tku)\right)\geq \alpha.
\]
Note that $z_0+tu\in\mathrm{int}(\mathrm{supp }f)$.
Hence
\[
\int_{\mathbb R^n} f(x)dx\geq\int_{U+\mathbb R_+u}f(x)dx\geq \alpha\mathrm{vol}(U+\mathbb R_+u)=\infty,
\]
thus showing that $f$ is not integrable, a contradiction.

We now show (2) implies (1). If $\int f=0$, then $f$ is integrable. Let us suppose that $\int f>0$. After a suitable translation,
let us assume that $f(0)=\|f\|_\infty$. For every $u\in\mathbb S^{n-1}$, since $f$ is not constant on $\mathbb R_+u$, then
there exists $s_u>0$ such that $f(s_uu)<\|f\|_\infty$. Now, using that $f$ is continuous implies that $t_u=\inf\{s>0:f(su)<\Vert f\Vert_\infty\}$
fulfills $f(t_uu)=\Vert f\Vert_\infty$, for every $u\in\mathbb S^{n-1}$.
We now show that if $\{t_u:u\in\mathbb S^{n-1}\}$ is unbounded, we arrive at a contradiction.
Indeed, in that case let $\{u_k\}\subset\mathbb S^{n-1}$ be such that $t_{u_k}\rightarrow\infty$ as $k\rightarrow\infty$.
Since $\mathbb S^{n-1}$ is compact, there exists a subsequence (which we can suppose w.l.o.g.~to be the sequence itself)
converging $u_k\rightarrow u_0\in\mathbb S^{n-1}$. For every $t>0$, then
\[
\frac{t}{t_{u_k}}(t_{u_k}u_k)+\left(1-\frac{t}{t_{u_k}}\right)0\rightarrow tu_0\quad\text{if}\quad k\rightarrow\infty.
\]
Note that if $t>0$ is fixed, since $t_{u_k}\rightarrow\infty$, there exists $k_t\in\mathbb N$ such that if $k\geq k_t$
then $t_{u_k}\geq t$. Hence
\[
f(tu_0) = \lim_{k\rightarrow\infty}f\left(\frac{t}{t_{u_k}}(t_{u_k}u_k)+\left(1-\frac{t}{t_{u_k}}\right)0\right)
\geq \lim_{k\rightarrow\infty}f(t_{u_k}u_k)^{\frac{t}{t_{u_k}}}f(0)^{1-\frac{t}{t_{u_k}}}=\Vert f\Vert_\infty.
\]
Therefore $f|_{\mathbb R_+u_0}=\Vert f\Vert_\infty$, contradicting the hypothesis.
Thus $\{t_u:u\in\mathbb S^{n-1}\}$ is bounded. If $t>t_u$, then $f(tu)<\Vert f\Vert_\infty$.
Hence let $t_*>t_u$ be such that $f(t_*u)\leq c<\Vert f\Vert_\infty$ for every $u\in\mathbb S^{n-1}$ and some $c>0$.
Observe that for every $t\geq t_*$ and every $u\in \mathbb S^{n-1}$ we have that
\[
f(t_*u)=f\left(\frac{t_*}{t}(tu)+\left(1-\frac{t_*}{t}\right)0\right) \geq f(tu)^\frac{t_*}{t}f(0)^{1-\frac{t_*}{t}}.
\]
Thus, integrating in polar coordinates, we get that
\[
\begin{split}
\int_{\mathbb R^n}f(x)dx & = \int_{\mathbb R^n\setminus B(0,t_*)}f(x)dx+\int_{B(0,t_*)}f(x)dx \\
& = n\omega_n\int_{\mathbb S^{n-1}}\int_{t_*}^\infty t^{n-1}f(tu)dtd\mu(u) + \int_{B(0,t_*)}f(x)dx \\
& \leq n\omega_n\int_{\mathbb S^{n-1}}\int_{t_*}^\infty t^{n-1}f(0)\left(\frac{f(t_*u)}{f(0)}\right)^{\frac{t}{t_*}}dtd\mu(u) + \int_{B(0,t_*)}f(x)dx \\
& \leq n\omega_nf(0)\int_{t_*}^\infty t^{n-1}\left(\frac{c}{f(0)}\right)^\frac{t}{t_*}dt +\int_{B(0,t_*)}f(x)dx,
\end{split}
\]
where $\mu$ is the uniform probability measure in $\mathbb S^{n-1}$.
Since the first integral is finite as $0<c/f(0)<1$ and the second one is finite as $f$ is bounded and $B(0,t_*)$ is bounded too,
hence we obtain that $f$ is integrable.
\end{proof}

\begin{rmk}
    The existence of $x_M\in\mathbb R^n$ in Lemma \ref{lem:integrable} is necessary. Indeed, the function $f:\mathbb R\rightarrow[0,\infty)$ where $f(x):=e^{-e^x}$, which is monotonically decreasing on $\mathbb R$, is log-concave, it fulfills (2) (since it is not constant over any ray) but does not fulfill (1) (simply noticing that $f(x)\geq e^{-1}$ for every $x\leq 0$).
\end{rmk}

\begin{teo}\label{thm:IntegrPolar}
Let $f:\mathbb R^n\rightarrow\mathbb R$ be log-concave with $0\in\mathrm{int}(\mathrm{supp}(f))$ and $f^\circ(x_M)=\|f^\circ\|_\infty$ for some $x_M\in\mathbb R^n$. Then $f^\circ$ is integrable.
\end{teo}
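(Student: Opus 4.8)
The plan is to use the characterization of integrability from Lemma~\ref{lem:integrable}, applied to $f^\circ=e^{-\varphi^*}$. Write $f=e^{-\varphi}$ with $\varphi:\mathbb R^n\to(-\infty,\infty]$ convex; the hypothesis $0\in\mathrm{int}(\mathrm{supp} f)$ means $\varphi$ is finite (and hence continuous) on a neighborhood of the origin, so in particular $\varphi(0)<\infty$. By Lemma~\ref{lem:integrable}, it suffices to show that $f^\circ$ is not identically zero and that $\varphi^*$ is not constant along any ray, i.e.\ there is no ray $R=x_0+\mathbb R_+u$ on which $\varphi^*\equiv c$ for a finite constant $c$ (which would make $f^\circ\equiv e^{-c}>0$ on $R$). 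Non-triviality of $f^\circ$ is immediate: since $\varphi$ is bounded above by some $M$ on a ball $B(0,r)$, for $\|x\|\le r$ we get $\varphi^*(x)\le \sup_{\|y\|\le r}\langle x,y\rangle - \inf_{\|y\|\le r}\varphi(y)$ is finite, so $f^\circ>0$ near $0$ and $\int f^\circ>0$.

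The heart of the argument is ruling out rays of constancy for $\varphi^*$. First I would observe the general fact that $\varphi^*$ is affine on a line segment (or ray) in direction $u$ only in a very restricted way: if $\varphi^*$ is affine on the segment $[a,a+u]$, say $\varphi^*(a+tu)=\varphi^*(a)+t\langle w,u\rangle$ for the appropriate subgradient-type slope, then by the defining supremum $\varphi^*(a+tu)=\sup_y(\langle a+tu,y\rangle-\varphi(y))$ the supremum must be "achieved in the limit" along a sequence of $y$'s with $\langle u,y\rangle$ bounded; more to the point, affinity of $\varphi^*$ in direction $u$ forces the effective domain of $\varphi$ (equivalently $\mathrm{supp} f$, up to closure) to lie in a halfspace $\{\langle u,y\rangle\le s\}$ and moreover $\varphi$ to be "linear at the boundary'' — but this is incompatible with $0$ being interior to $\mathrm{supp} f$ together with $f\in L^1$. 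The cleanest route: suppose $\varphi^*(x_0+tu)=c$ for all $t\ge 0$. Then for every $y$ and every $t\ge0$, $\langle x_0+tu,y\rangle-\varphi(y)\le c$, so $t\langle u,y\rangle\le c+\varphi(y)-\langle x_0,y\rangle$ for all $t\ge0$, which forces $\langle u,y\rangle\le 0$ for every $y\in\mathrm{dom}\,\varphi$. Since $0\in\mathrm{int}(\mathrm{supp} f)=\mathrm{int}(\overline{\mathrm{dom}\,\varphi})$, a small ball $B(0,\rho)\subset\mathrm{dom}\,\varphi$, so $\langle u,y\rangle\le0$ for all $\|y\|\le\rho$, forcing $u=0$ — contradicting $u\neq0$. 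Hence no such ray exists.

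With both conditions of Lemma~\ref{lem:integrable} verified, the lemma gives $f^\circ\in L^1(\mathbb R^n)$, which is the assertion. I would also remark that $f^\circ$ is automatically log-concave (as $\varphi^*$ is convex), so in fact $f^\circ\in\mathcal F(\mathbb R^n)$, matching the statement made after the definition of the polar function in the introduction.

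The main obstacle is the ray-constancy step, and the subtlety I would be careful about is conflating $\mathrm{dom}\,\varphi$ with $\mathrm{supp} f=\overline{\mathrm{dom}\,\varphi}$: one must make sure the argument only uses that $\mathrm{dom}\,\varphi$ contains a neighborhood of $0$, which follows from $0\in\mathrm{int}(\mathrm{supp} f)$ together with the fact (recalled before Lemma~\ref{lem:integrable}) that a log-concave function may be taken continuous on its support, so $\varphi$ is finite on the interior of its support. The computation $t\langle u,y\rangle\le c+\varphi(y)-\langle x_0,y\rangle$ holding for all $t\ge0$ is elementary, so once the domain issue is handled there is no real difficulty; everything else is bookkeeping with the Legendre transform.
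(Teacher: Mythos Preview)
Your approach is essentially the paper's: apply Lemma~\ref{lem:integrable} to $f^\circ$ and rule out rays of constancy for $\varphi^*$ using only that $\varphi$ is finite (hence bounded) on a ball $B(0,\delta)$. The paper does this directly rather than by contradiction: picking $z=\delta\frac{y_0+tu}{|y_0+tu|}$ in the supremum defining $\varphi^*$ gives $\varphi^*(y_0+tu)\ge \delta|y_0+tu|-C\to\infty$, so $\varphi^*$ in fact grows at least linearly along every ray. Your contradiction argument---constancy would force $\mathrm{dom}\,\varphi\subset\{y:\langle u,y\rangle\le 0\}$, impossible since a ball about $0$ lies in $\mathrm{dom}\,\varphi$---is correct and amounts to the same observation.

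One step you should drop, though: your ``non-triviality'' argument is both wrong and unnecessary. It is wrong because the supremum in $\varphi^*(x)=\sup_y(\langle x,y\rangle-\varphi(y))$ runs over \emph{all} $y$, not just $\|y\|\le r$, so bounding $\varphi$ on a ball gives no upper bound on $\varphi^*$; indeed $\varphi^*$ may be $+\infty$ everywhere except on a lower-dimensional set (take $f(x)=e^{x}$ on $\mathbb R$). It is unnecessary because Lemma~\ref{lem:integrable} says $f^\circ$ is integrable as soon as either $\int f^\circ=0$ \emph{or} there is no ray of constancy; having established the latter, you are done regardless of whether $\int f^\circ>0$. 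The paper's proof accordingly omits any non-triviality check.
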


\begin{proof}
Let us suppose that $f=e^{-\varphi}$. We show now that $\varphi^*$ (and thus $f^\circ$) is not
constant over any ray $y_0+\mathbb R_+u$, for any $y_0,u\in\mathbb R^n$, $u\neq 0$, thus concluding by Lemma \ref{lem:integrable} that $f^\circ$ is integrable.
Since $\varphi$ is continuous and $0\in\mathrm{int}(\mathrm{supp }f)$, let us suppose that $\varphi(x)\leq C$
whenever $|x|\leq\delta$ for some $C,\delta>0$. Let us consider $y_0+tu$, $t>0$. Then
\[
\varphi^*(y_0+tu)=\sup_{z}(\langle y_0+tu,z\rangle-\varphi(z))\geq\delta|y_0+tu|-\varphi\left(\delta\frac{y_0+tu}{|y_0+tu|}\right)\geq \delta|y_0+tu|-C,
\]
thus showing that $\varphi^*$ is not constant over any ray $y_0+\mathbb R_+u$, as desired.
\end{proof}

\begin{rmk}
The integrability of $f^\circ$ can be easily deduced by using some Blaschke-Santal\'o functional inequality
\[
\int f\int f^\circ\leq (2\pi)^n
\]
but only for certain particular translations of $f$ (for instance, when the Santal\'o point of $f$ is the origin, see \cite{artstein2004santalo}). However, the comment in \cite[Rmk.~2]{klartag2005geometry} is not correct
(where the authors said that "All of our results hold, with the same proofs, for log-concave
functions that reach their maximum at the origin"), since $f^\circ$ is not necessarily integrable if $f(0)=\|f\|_\infty$.  For instance, letting
\[
f(x)=\left\{\begin{array}{cc}e^{-\frac{x^2}{2}} & \text{if }x\geq 0 \\
0 & \text{otherwise,}\end{array}\right.
\]
if $f=e^{-\varphi}$, then
\[
\varphi(x)=\left\{\begin{array}{cc}\frac{x^2}{2} & \text{if }x\geq 0 \\
\infty & \text{otherwise}\end{array}\right.
\quad \text{and} \quad
\varphi^*(x)=\left\{\begin{array}{cc}\frac{x^2}{2} & \text{if }x\geq 0 \\
0 & \text{otherwise,}\end{array}\right.
\]
thus having that $f^\circ(x)=e^0=1$ if $x<0$, and hence $f^\circ$ would not be integrable.
\end{rmk}

\emph{Acknowledgements.} We would like to thank Juli\'an Haddad for his valuable comments and pointers, and to the University of Sevilla for hosting Leticia Alves da Silva during two months, in which this work has been done. She also thanks the support of the IFMG - Campus Bambu\'i while conducting this work.


\bibliographystyle{abbrv}
\bibliography{ref.bib}

\end{document}